\documentclass[11pt]{amsart}
\usepackage[margin=1.1in]{geometry}
\usepackage{amsmath,amssymb}
\usepackage{graphicx}
\usepackage[dvipsnames]{xcolor}
\usepackage{enumitem}
\usepackage{booktabs}
\usepackage{array}
\usepackage[colorlinks=true,linkcolor=blue!50!black,citecolor=green!40!black,urlcolor=blue!50!black]{hyperref}

\newtheorem{theorem}{Theorem}[section]
\newtheorem{proposition}[theorem]{Proposition}
\newtheorem{lemma}[theorem]{Lemma}
\newtheorem{corollary}[theorem]{Corollary}
\newtheorem{observation}[theorem]{Observation}
\theoremstyle{definition}
\newtheorem{definition}[theorem]{Definition}
\newtheorem{problem}[theorem]{Problem}
\theoremstyle{remark}

\hypersetup{pdftitle={Diameter-two graphs whose mutual-visibility chromatic number equals their chromatic number},pdfauthor={Siddhesh Chaubal}}

\newcommand{\cmu}{\chi_\mu}
\newcommand{\R}{\text{red}}
\newcommand{\B}{\text{blue}}
\newcommand{\claimhead}[2]{\par\medskip\noindent\textbf{Claim~#1.} \textit{#2}\par\smallskip\noindent\ignorespaces}
\newcommand{\claimqed}{\hfill$\diamond$\par}

\title[Diameter-two graphs with $\chi_\mu=\chi$]{Diameter-two graphs whose mutual-visibility chromatic number equals their chromatic number}
\author{Siddhesh Chaubal}
\address{Afinit, India}
\email{spchaubal@gmail.com}

\subjclass[2020]{05C15, 05C12}
\keywords{mutual-visibility set, mutual-visibility colouring, chromatic number, diameter two}

\begin{document}

\begin{abstract}
A set $S$ of vertices of a graph $G$ is a \emph{mutual-visibility set} if every pair of vertices of $S$ is joined by a shortest path whose internal vertices avoid $S$, and the \emph{mutual-visibility chromatic number} $\cmu(G)$ is the least number of mutual-visibility sets into which $V(G)$ can be partitioned. Klav\v{z}ar, Kuziak, Valenzuela Tripodoro and Yero (2025) proved that $\cmu(G)\le\chi(G)$ whenever $G$ has diameter two, observed that equality holds for complete bipartite graphs, and asked whether there are any other diameter-two graphs attaining equality. We answer this question in the affirmative. We construct a graph $G_{11}$ of order $11$ and diameter two with $\cmu(G_{11})=\chi(G_{11})=3$, and an infinite family of such graphs obtained from $G_{11}$ by replacing one vertex with pairwise non-adjacent copies of it.
\end{abstract}

\maketitle

\section{Introduction}

Mutual visibility in graphs was introduced by Di Stefano~\cite{DiStefano2022}, motivated by settings in which a set of entities placed at vertices of a network, such as mobile robots, or nodes whose messages must not pass through other members of the set, must be able to ``see'' one another along shortest paths that avoid the other members. The concept has since been studied intensively, including on graphs of diameter two~\cite{CDKY2024}. Klav\v{z}ar, Kuziak, Valenzuela Tripodoro and Yero~\cite{KKVY2025} introduced the corresponding colouring problem: partition the vertex set into as few mutual-visibility sets as possible.

\begin{definition}[\cite{DiStefano2022,KKVY2025}]
Let $G$ be a graph and $S\subseteq V(G)$. Two vertices $u,v\in S$ are called \emph{$S$-visible} when some shortest $u,v$-path avoids $S$ except at its two ends, and $S$ is called a \emph{mutual-visibility set} (MV set) if every pair of its vertices is $S$-visible. A \emph{mutual-visibility colouring} of $G$ is a partition of $V(G)$ into MV sets, and the \emph{mutual-visibility chromatic number} $\cmu(G)$ is the least number of sets in such a partition.
\end{definition}

For graphs of diameter two, every independent set is an MV set (two non-adjacent vertices of an independent set $S$ have a common neighbour, which cannot lie in $S$), and so every proper colouring is a mutual-visibility colouring:

\begin{proposition}[{\cite[Proposition~2.2]{KKVY2025}}]\label{prop:upper}
If $G$ has diameter two, then $\cmu(G)\le\chi(G)$. Equality holds for every complete bipartite graph $K_{r,s}$ with $(r,s)\neq(1,1)$.
\end{proposition}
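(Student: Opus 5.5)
The statement has two parts, and I will treat them in order.

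For the upper bound, the plan is to show that every proper colouring of $G$ is already a mutual-visibility colouring; taking an optimal proper colouring then gives $\cmu(G)\le\chi(G)$. So let $S$ be an independent set and let $u,v\in S$ be distinct.
\begin{enumerate}[label=(\roman*)]
\item Since $S$ is independent, $u$ and $v$ are not adjacent.
\item Since $G$ has diameter two, $d(u,v)=2$, so there is a common neighbour $w$ of $u$ and $v$.
\item The vertex $w$ is adjacent to $u\in S$, so independence forces $w\notin S$.
\item Hence $u,w,v$ is a shortest $u,v$-path whose only internal vertex avoids $S$.
\end{enumerate}
Thus every independent set is an MV set, and each colour class of a proper colouring qualifies.

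For equality on $K_{r,s}$ with $(r,s)\ne(1,1)$, first record that $K_{r,s}$ has diameter two (or is a star, still of diameter two) and that $\chi(K_{r,s})=2$. By the first part it suffices to show $\cmu(K_{r,s})\ge 2$, that is, $V(K_{r,s})$ itself is not an MV set.

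The hypothesis $(r,s)\ne(1,1)$ means one side, say $A$, has two vertices $a,a'$. They are non-adjacent, so $d(a,a')=2$. Every shortest $a,a'$-path has the form $a,b,a'$ with $b$ in the other side $B$. When $S=V$, the vertex $b$ lies in $S$, so $a$ and $a'$ are not $S$-visible. Therefore at least two colours are needed, and $\cmu(K_{r,s})=2=\chi(K_{r,s})$.

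No step is a real obstacle; the only care needed is to confirm the diameter-two hypothesis, including the star case, and to note that $(1,1)$ is excluded precisely because $K_2$ has diameter one and $\cmu(K_2)=1$.
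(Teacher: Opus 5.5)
Your proof is correct and follows essentially the same route as the paper: the upper bound rests on the observation that in a diameter-two graph every independent set is an MV set (a common neighbour of two of its vertices cannot lie in the set), and equality for $K_{r,s}$ follows because $\chi(K_{r,s})=2$ while $\cmu\ge 2$ for any non-complete graph. The paper only sketches the upper bound, notes $\cmu(G)\ge2$ for non-complete $G$ in its reformulation section, and defers the rest to the original source; your argument spells out exactly those details.
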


The authors of~\cite{KKVY2025} posed the following problem (quoted from the published version).

\begin{problem}[{\cite[Problem~8.3]{KKVY2025}}]\label{prob:83}
Characterize the graphs $G$ of diameter two for which $\cmu(G)=\chi(G)$ holds. In particular, are there any additional such graphs besides the complete bipartite graphs? In addition, are there some other relationships between $\cmu(G)$ and $\chi(G)$ for other graph classes?
\end{problem}

Bre\v{s}ar, Peterin, Samadi and Yero~\cite{BPSY2026} obtained partial results towards Problem~\ref{prob:83}. Using the $1$-defective chromatic number $\chi_1(G)$ (the least $k$ such that $V(G)$ can be partitioned into $k$ sets each inducing a subgraph of maximum degree at most $1$; see~\cite{CCW1986}), they proved:

\begin{proposition}[{\cite[Proposition~18]{BPSY2026}}]\label{prop:bpsy}
If $G$ has diameter two, then $\cmu(G)\le\chi_1(G)$.
\end{proposition}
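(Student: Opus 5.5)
We prove Proposition~\ref{prop:bpsy} directly from the definition, in the same way Proposition~\ref{prop:upper} follows from the fact that independent sets are MV sets in diameter-two graphs. Fix a partition of $V(G)$ into $\chi_1(G)$ classes, each inducing a subgraph of maximum degree at most $1$. It suffices to show that every such class $S$ is a mutual-visibility set. Then the partition is a mutual-visibility colouring, and $\cmu(G)\le\chi_1(G)$.

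Take distinct $u,v\in S$. There are two cases, according to $d(u,v)$, which is $1$ or $2$ since $G$ has diameter two. If $uv\in E(G)$, the edge $uv$ is a shortest $u,v$-path with no internal vertices, so $u$ and $v$ are $S$-visible trivially. If $d(u,v)=2$, we need a common neighbour $w$ of $u$ and $v$ with $w\notin S$; then $u,w,v$ is a shortest path whose only internal vertex avoids $S$.

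This second case is the only point needing an argument. Suppose every common neighbour $w$ of $u$ and $v$ lies in $S$, and fix one such $w$, which exists because $d(u,v)=2$. Then $w\in S$ has the two distinct neighbours $u,v$ inside $S$, so $\deg_{G[S]}(w)\ge 2$. This contradicts the choice of $S$ as a class inducing maximum degree at most $1$. Hence some common neighbour lies outside $S$, and $u,v$ are $S$-visible.

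Combining the two cases, every class is an MV set, which proves the proposition. The only step with real content is the degree-two contradiction for a common neighbour. Note that the bound $1$ on the maximum degree is exactly what makes that step work: a single common neighbour in $S$ already has degree $2$ in $G[S]$, and the argument does not extend to classes of larger defect.
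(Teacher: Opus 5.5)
Your proof is correct and complete. The paper does not prove this statement itself (it is quoted from Bre\v{s}ar, Peterin, Samadi and Yero), but your argument is exactly the natural extension of the paper's justification of Proposition~\ref{prop:upper} via the diameter-two criterion of Section~\ref{sec:prelim}: a common neighbour of two non-adjacent vertices of a class $S$ cannot lie in $S$, since it would have degree at least $2$ in $G[S]$, so in fact every common neighbour of $u$ and $v$ lies outside $S$.
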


Since every proper colouring is $1$-defective, $\chi_1(G)\le\chi(G)$ for every graph $G$, so Proposition~\ref{prop:bpsy} sharpens Proposition~\ref{prop:upper}. Consequently, any diameter-two graph with $\cmu(G)=\chi(G)$ must also satisfy $\chi_1(G)=\chi(G)$. On the other hand, the $5$-cycle and the Petersen graph, for instance, satisfy $\cmu<\chi$~\cite{BPSY2026}. Despite these results, the question of whether there exist diameter-two graphs other than complete bipartite graphs satisfying $\cmu(G)=\chi(G)$ remained unresolved. We settle this existence question in the affirmative.

\subsection*{Our results}
In Section~\ref{sec:g11} we exhibit a graph $G_{11}$ of order $11$ and diameter two with $\cmu(G_{11})=\chi(G_{11})=3$, and more generally show that the graphs $G_{11}(t)$, obtained from $G_{11}$ by replacing one vertex by $t$ pairwise non-adjacent copies, satisfy $\cmu=\chi=3$ for every $t\ge1$ (Theorems~\ref{thm:g11} and~\ref{thm:family}, Corollary~\ref{cor:g11}). The proof is structural: four short claims, each a direct consequence of the neighbourhood structure of these graphs, rule out every mutual-visibility colouring with two colours.

The paper is organized as follows. Section~\ref{sec:prelim} reformulates mutual visibility for graphs of diameter two, Section~\ref{sec:g11} contains the constructions and the proofs, and Section~\ref{sec:concl} lists some open questions.

\section{A reformulation for diameter two}\label{sec:prelim}

All graphs are finite, simple and connected unless stated otherwise; $N(v)$ denotes the open neighbourhood of a vertex $v$ and $d(u,w)$ the distance between $u$ and $w$. Let $G$ have diameter two. If $u,w$ are adjacent, the edge $uw$ is a shortest $u,w$-path without internal vertices. If $u,w$ are non-adjacent, then $d(u,w)=2$, and the shortest $u,w$-paths are exactly the paths $u\,x\,w$ with $x$ a common neighbour of $u$ and $w$. Hence a set $S\subseteq V(G)$ is an MV set if and only if every two non-adjacent vertices of $S$ have a common neighbour outside $S$. Consequently, if $G$ is moreover not complete (so $\cmu(G)\ge2$), then $\cmu(G)\le 2$ if and only if there is a colouring $\varphi\colon V(G)\to\{\R,\B\}$ with the following property:
\begin{quote}
\textbf{(V)} every two non-adjacent vertices of the same colour have a common neighbour of the other colour.
\end{quote}
We shall use~(V) in the following form: \emph{if $u,w$ are non-adjacent and all their common neighbours have colour $\gamma$, then $u$ and $w$ do not both have colour $\gamma$.}

\section{The graph \texorpdfstring{$G_{11}$}{G11} and the family \texorpdfstring{$G_{11}(t)$}{G11(t)}}\label{sec:g11}

\subsection{The graph \texorpdfstring{$G_{11}$}{G11}}

\begin{definition}\label{def:g11}
$G_{11}$ is the graph with vertex set $\{0,1,\dots,10\}$, whose vertices have the following roles:
\begin{itemize}[itemsep=1pt]
\item the \emph{base vertices} $0,1,2,3$
\item the \emph{caps} $4,5,6,7$
\item the \emph{links} $8,9$
\item the \emph{hub} $10$
\end{itemize}
Its $22$ edges are:
\begin{itemize}[itemsep=1pt]
\item the edges $01,12,23,30$, forming the \emph{base cycle} $0\,1\,2\,3\,0$
\item each cap is adjacent to two consecutive base vertices: $4\sim0,1$,\enspace $5\sim1,2$,\enspace $6\sim2,3$,\enspace $7\sim3,0$
\item each link is adjacent to two caps with no common base neighbour: $8\sim4,6$,\enspace $9\sim5,7$
\item the hub is adjacent to every cap and every link: $10\sim4,5,6,7,8,9$
\end{itemize}
\end{definition}

\begin{figure}[ht]
\centering
\includegraphics[width=0.88\linewidth]{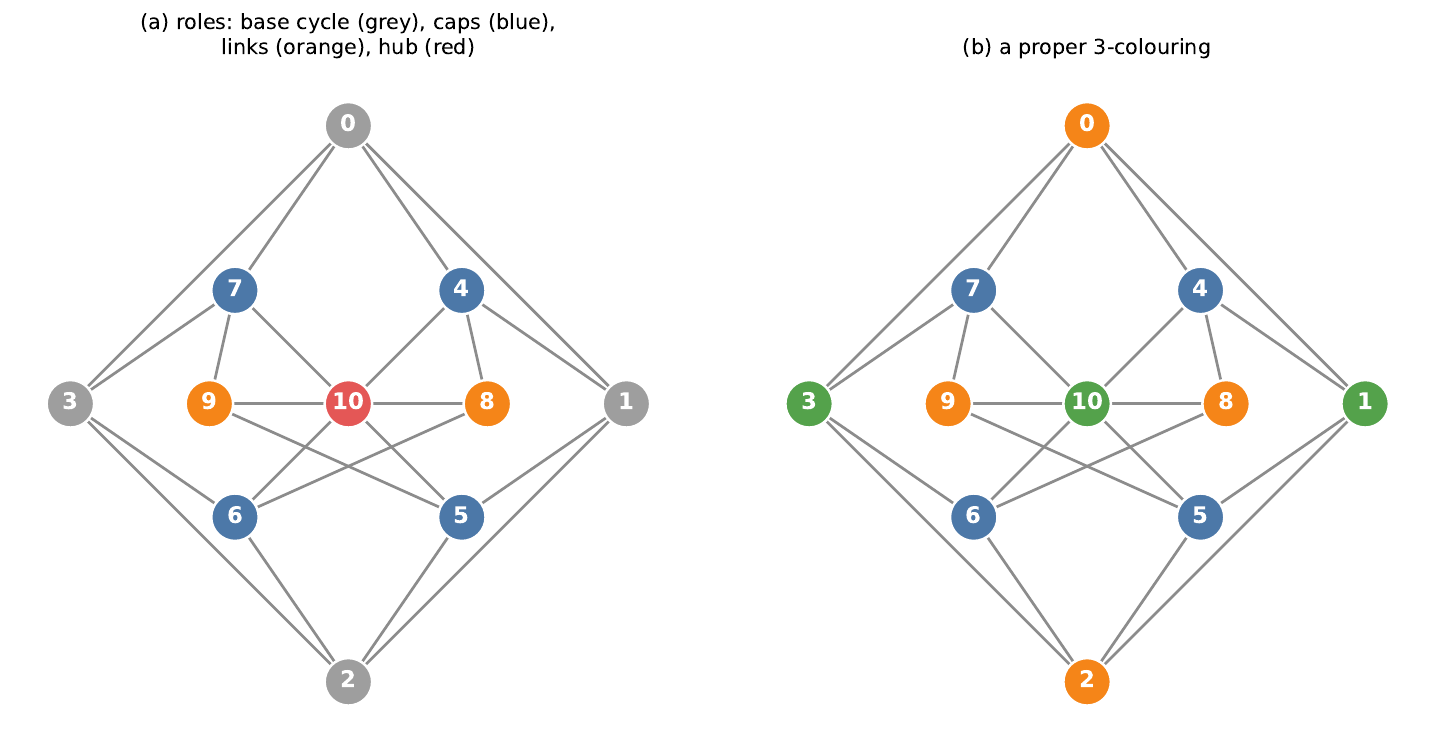}
\caption{The graph $G_{11}$. (a) The roles of the vertices. (b) A proper $3$-colouring with classes $\{4,5,6,7\}$, $\{0,2,8,9\}$, $\{1,3,10\}$.}
\label{fig:g11}
\end{figure}

The graph $G_{11}$ is drawn in Figure~\ref{fig:g11}. We denote the hub $10$ by $h$ and the set $\{0,1,2,3\}$ of base vertices by $B$. For a cap $c$ let $\sigma(c)$ be the set of the two base vertices adjacent to $c$ (they are consecutive on the base cycle), and let $\lambda(c)$ be the link adjacent to $c$:
\[
\begin{array}{c@{\qquad}cccc}
c & 4 & 5 & 6 & 7\\ \hline
\sigma(c) & \{0,1\} & \{1,2\} & \{2,3\} & \{3,0\}\\
\lambda(c) & 8 & 9 & 8 & 9
\end{array}
\]
Thus the two caps of a link have disjoint sets $\sigma$, so that $\sigma(4)\cup\sigma(6)=\sigma(5)\cup\sigma(7)=B$. For two caps $c,c'$ with different links, $\sigma(c)$ and $\sigma(c')$ have exactly one base vertex in common, and we say that $c$ and $c'$ \emph{share} this base vertex. In the cyclic order $4,5,6,7$ consecutive caps share the base vertices $1,2,3,0$, respectively. Table~\ref{tab:nbhd} lists all neighbourhoods.

\begin{table}[ht]
\centering
\begin{tabular}{ll@{\qquad}ll@{\qquad}ll}
\toprule
$v$ & $N(v)$ & $v$ & $N(v)$ & $v$ & $N(v)$\\
\midrule
$0$ & $\{1,3,4,7\}$ & $4$ & $\{0,1,8,10\}$ & $8$ & $\{4,6,10\}$\\
$1$ & $\{0,2,4,5\}$ & $5$ & $\{1,2,9,10\}$ & $9$ & $\{5,7,10\}$\\
$2$ & $\{1,3,5,6\}$ & $6$ & $\{2,3,8,10\}$ & $10$ & $\{4,5,6,7,8,9\}$\\
$3$ & $\{0,2,6,7\}$ & $7$ & $\{0,3,9,10\}$ & &\\
\bottomrule
\end{tabular}
\caption{The neighbourhoods in $G_{11}$. A base vertex $b$ is adjacent to its two neighbours on the base cycle and to the two caps $c$ with $b\in\sigma(c)$; a cap $c$ is adjacent to $\sigma(c)$, $\lambda(c)$ and the hub; a link is adjacent to its two caps and the hub; the hub is adjacent to all caps and both links.}
\label{tab:nbhd}
\end{table}

\noindent\begin{minipage}{\linewidth}
\begin{observation}[Common neighbourhoods]\label{obs:common}
In $G_{11}$, with hub $h$ and base $B$, the following hold.
\begin{enumerate}[label=(\alph*)]
\item Two opposite base vertices (e.g.\ $0$ and $2$) are non-adjacent, and their common neighbours are exactly the other two base vertices ($1$ and $3$).
\item Let $\ell$ be a link, and let $x$ be the other link or a cap not adjacent to $\ell$ (e.g.\ $\ell=8$ and $x=9$ or $x=5$). Then $\ell\not\sim x$, and $h$ is the only common neighbour of $\ell$ and $x$.
\item Let $\ell$ be a link and $b\in B$, and let $c$ be the unique cap with $\lambda(c)=\ell$ and $b\in\sigma(c)$ (e.g.\ $\ell=8$, $b=0$, $c=4$). Then $\ell\not\sim b$, and $c$ is the only common neighbour of $\ell$ and $b$.
\item Let $c,c'$ be caps sharing exactly one base vertex $b$ (e.g.\ $c=6$, $c'=7$, $b=3$). Then $c\not\sim c'$, and the common neighbours of $c$ and $c'$ are exactly $b$ and $h$.
\item In the situation of (d), let $r$ be the base vertex with $\sigma(c')=\{b,r\}$ (in the example, $r=0$). Then $c\not\sim r$, and $b$ is the only common neighbour of $c$ and $r$.
\end{enumerate}
\end{observation}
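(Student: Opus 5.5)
The plan is a direct verification from the neighbourhood list in Table~\ref{tab:nbhd}. Everything reduces to intersecting two neighbourhoods. To avoid checking every instance, I will first exploit the symmetry of $G_{11}$. The map $\rho$ defined by $0\mapsto1\mapsto2\mapsto3\mapsto0$, $4\mapsto5\mapsto6\mapsto7\mapsto4$, $8\leftrightarrow9$, $10\mapsto10$ preserves the edge list of Definition~\ref{def:g11}. Indeed, it rotates the base cycle and the caps together, and since it sends $\sigma(c)$ to $\sigma(\rho(c))$, it preserves the cap--base incidences. It also maps the link $8$ with caps $\{4,6\}$ to the link $9$ with caps $\{5,7\}$, and the reverse, and it fixes the hub. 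The reflection $0\leftrightarrow1$, $3\leftrightarrow2$, $7\leftrightarrow5$, with $4,6,8,9,10$ fixed, is also an automorphism. This can be checked against the edge list in the same way. These two maps together act transitively on each configuration type appearing in (a)--(e), so it suffices to verify one representative of each, namely the examples given in the statement.

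For the representatives I read off Table~\ref{tab:nbhd}.
(a) $N(0)\cap N(2)=\{1,3,4,7\}\cap\{1,3,5,6\}=\{1,3\}$, and $0\notin N(2)$.
(b) $N(8)\cap N(9)=\{10\}$ and $N(8)\cap N(5)=\{10\}$; moreover $9,5\notin N(8)$. The case of $x=7$ follows by the reflection, or directly from $N(7)\cap N(8)=\{10\}$.
(c) With $\ell=8$ there are four choices of $b$. Here $N(8)\cap N(b)$ equals $\{4\}$ for $b\in\{0,1\}$ and $\{6\}$ for $b\in\{2,3\}$. In each case this is the cap $c$ with $\lambda(c)=8$ and $b\in\sigma(c)$. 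The case $\ell=9$ follows from $\rho$.
(d) $N(6)\cap N(7)=\{2,3,8,10\}\cap\{0,3,9,10\}=\{3,10\}$, and $6\notin N(7)$.
(e) $N(6)\cap N(0)=\{3\}$ and $0\notin N(6)$. For the other orientation of the same pair, taking $c=7$, $c'=6$, $r=2$, we get $N(7)\cap N(2)=\{3\}$.

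The only point needing care is the symmetry step. I must check that the orbits really cover all instances. For (d) and (e), the caps sharing a base vertex are the cyclically consecutive pairs, and these are permuted transitively by $\rho$. The ordered pairs $(c,c')$ are then handled by the reflection. If that bookkeeping seems shaky, I would instead verify every case directly, since there are only $2$, $6$, $8$, $4$ and $8$ instances for (a)--(e) respectively, each a single set intersection.
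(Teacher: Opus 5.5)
Your proposal is correct and follows essentially the paper's own argument: the paper only says that each item can be checked directly from Table~\ref{tab:nbhd}, and you carry out exactly those neighbourhood intersections, using the rotation $\rho$ and the reflection (both are automorphisms that preserve the vertex roles) only to reduce the number of cases. The one check you leave implicit, $\ell\not\sim b$ in (c), is immediate because links have no base neighbours.
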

\end{minipage}

Each item of Observation~\ref{obs:common} can be checked directly from Table~\ref{tab:nbhd}.

\begin{lemma}\label{lem:g11basic}
$G_{11}$ has diameter two and $\chi(G_{11})=3$.
\end{lemma}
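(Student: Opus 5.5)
The lemma splits into a diameter computation and a chromatic-number computation. I will handle them separately, using Table~\ref{tab:nbhd} throughout.

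\emph{Diameter two.} Since $G_{11}$ is not complete (e.g.\ $0\not\sim2$), it suffices to show every non-adjacent pair has a common neighbour. I will organize the check by vertex types, using the symmetry of the construction: the rotation $0\to1\to2\to3\to0$, $4\to5\to6\to7\to4$, $8\leftrightarrow9$, $h$ fixed preserves all edges, which cuts the casework down.
\begin{itemize}
\item Base--base: opposite base vertices have common neighbours the other two base vertices, by Observation~\ref{obs:common}(a).
\item Cap--cap: opposite caps (e.g.\ $4,6$) share the link $\lambda$ and $h$. Caps that share a base vertex have common neighbours as in Observation~\ref{obs:common}(d).
\item Link--link and link--cap: these pairs see each other through $h$ (Observation~\ref{obs:common}(b)), or directly when adjacent.
\item Link--base: covered by Observation~\ref{obs:common}(c).
\item Cap--base: a cap $c$ and a base vertex $r\notin\sigma(c)$ are either in the situation of Observation~\ref{obs:common}(e), or $r$ is adjacent to a base vertex of $\sigma(c)$. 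Every base vertex outside $\sigma(c)$ is adjacent on the $4$-cycle to some vertex of $\sigma(c)$, so a common neighbour always exists.
\item Hub--base: for each $b\in B$ there is a cap $c$ with $b\in\sigma(c)$, and $c$ is adjacent to $h$.
\end{itemize}
This covers all vertex-type pairs, so the diameter is two.

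\emph{$\chi(G_{11})\le 3$.} I will verify that the colouring of Figure~\ref{fig:g11}(b), with classes $\{4,5,6,7\}$, $\{0,2,8,9\}$, $\{1,3,10\}$, is proper by checking that each class is independent:
\begin{itemize}
\item The caps are pairwise non-adjacent, since caps are adjacent only to base vertices, links and $h$.
\item The vertices $0,2$ are opposite, links are never adjacent to base vertices, and $8\not\sim9$.
\item The vertices $1,3$ are opposite, and $h$ has no base neighbour.
\end{itemize}

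\emph{$\chi(G_{11})\ge 3$.} I only need an odd cycle. The triangle $0\,1\,4$ works, since $4\sim0,1$ and $0\sim1$. So $\chi\ge3$.

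I expect no real obstacle here. The only point needing care is exhaustiveness of the diameter check, and I would handle it by listing the unordered type pairs (base, cap, link, hub) and citing the appropriate item of Observation~\ref{obs:common} or a direct edge for each.
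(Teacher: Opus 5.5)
Your proof is correct and follows the paper's route: you check that every non-adjacent pair has a common neighbour, you use the same three independent classes $\{4,5,6,7\}$, $\{0,2,8,9\}$, $\{1,3,10\}$, and you exhibit a triangle for the lower bound ($\{0,1,4\}$ where the paper uses $\{4,8,10\}$; either works). Your type-by-type list omits the hub--cap and hub--link pairs, but all of these are edges, so the omission leaves no gap.
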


\begin{proof}
It is easy to verify from Table~\ref{tab:nbhd} that every two non-adjacent vertices have a common neighbour; as $G_{11}$ is not complete, its diameter is two.

The sets $\{4,5,6,7\}$, $\{0,2,8,9\}$ and $\{1,3,10\}$ are independent (Table~\ref{tab:nbhd}), so $\chi\le3$; and $\{4,8,10\}$ is a triangle, so $\chi\ge3$.
\end{proof}

\begin{figure}[ht]
\centering
\includegraphics[width=0.86\linewidth]{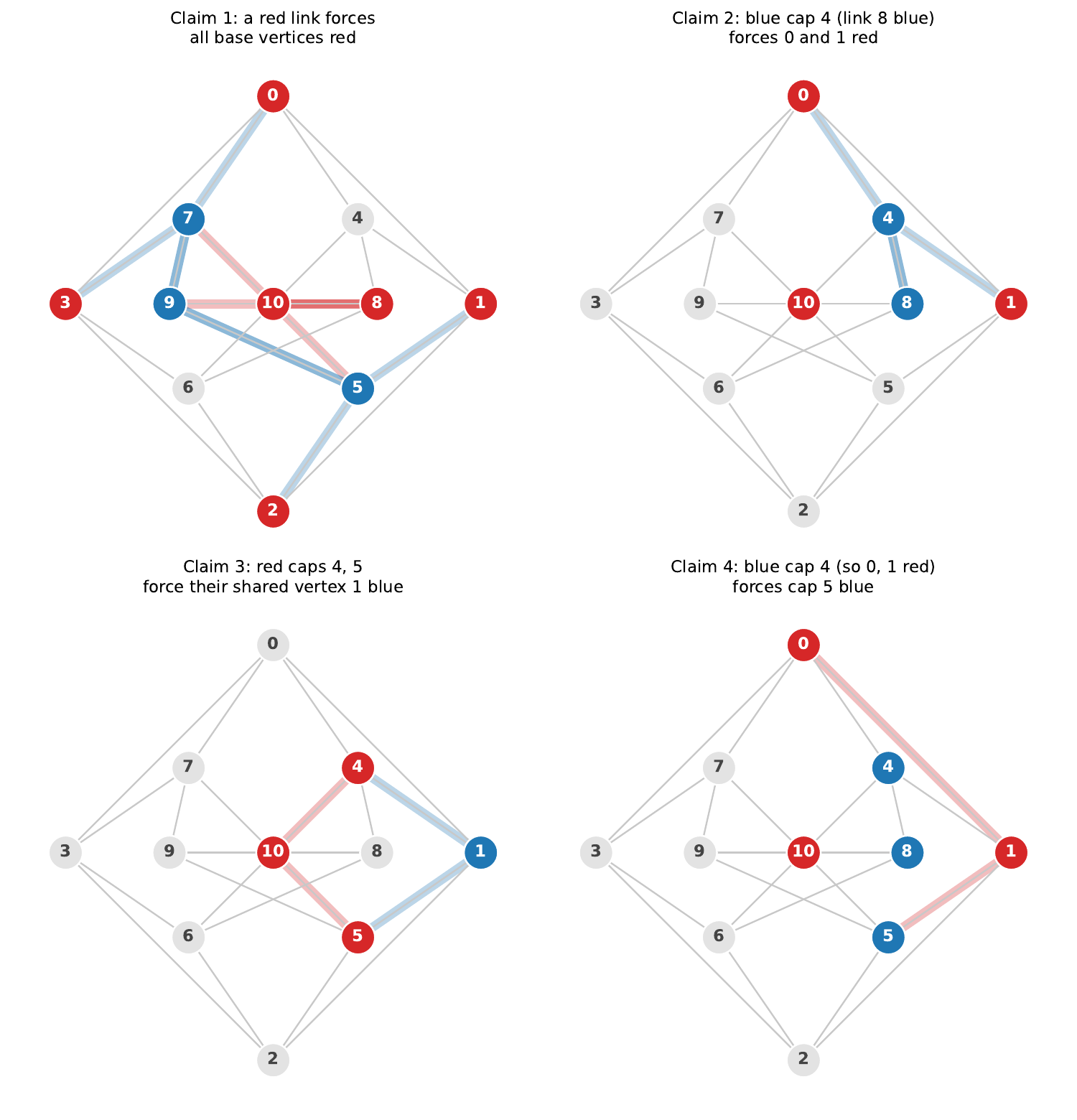}
\caption{The four claims in the proof of Theorem~\ref{thm:g11}, each for one representative case (the hub $10$ is red); each panel title states what is given and what is deduced. Grey vertices are not involved. Each shaded path $u\,x\,w$ marks a pair $u,w$ of non-adjacent vertices together with a common neighbour $x$, shaded in the colour of $x$.}
\label{fig:claims}
\end{figure}

\begin{theorem}\label{thm:g11}
$\cmu(G_{11})=\chi(G_{11})=3$. In particular, $G_{11}$ is a diameter-two graph which is not complete bipartite and satisfies $\cmu(G_{11})=\chi(G_{11})$.
\end{theorem}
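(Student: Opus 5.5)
The plan is to get the upper bound and the lower bound $\cmu(G_{11})\ge 2$ for free, and to spend all the work on ruling out a $2$-colouring. By Lemma~\ref{lem:g11basic}, $G_{11}$ has diameter two and $\chi(G_{11})=3$, so Proposition~\ref{prop:upper} gives $\cmu(G_{11})\le 3$. Since $G_{11}$ is not complete, $\cmu(G_{11})\ge2$. By the reformulation of Section~\ref{sec:prelim}, it then suffices to show that no colouring $\varphi\colon V(G_{11})\to\{\R,\B\}$ satisfies~(V). Suppose such a $\varphi$ exists. Swapping the two colours if necessary, we may assume the hub $h$ is red. We derive a contradiction in a short chain of claims, each applying~(V) in the form ``if all common neighbours of non-adjacent $u,w$ have colour $\gamma$, then $u,w$ are not both $\gamma$'' to one item of Observation~\ref{obs:common}. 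The final not-clause (``not complete bipartite'') is immediate, since $G_{11}$ contains the triangle $\{4,8,10\}$.

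\textbf{Claim 1:} \emph{if a link $\ell$ is red, then the other link and the two caps not adjacent to $\ell$ are blue.} This follows from Observation~\ref{obs:common}(b), because the only common neighbour is the red hub.

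\textbf{Claim 2:} \emph{if all four base vertices have the same colour, we have a contradiction.} This follows from Observation~\ref{obs:common}(a): $0$ and $2$ are non-adjacent, and their only common neighbours $1$ and $3$ have the same colour as $0$ and $2$.

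\textbf{Claim 3:} \emph{both links are blue.} Suppose instead that, say, $8$ is red. By Claim~1, the link $9$ and the caps $5,7$ are blue. For each base vertex $b$, Observation~\ref{obs:common}(c) says that the unique common neighbour of $9$ and $b$ is $5$ or $7$, which is blue. Hence every $b$ is red, contradicting Claim~2. The case where $9$ is red is symmetric, via the rotation of the base cycle.

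\textbf{Claim 4:} \emph{if a cap $c$ is blue, then both vertices of $\sigma(c)$ are red.} This is Observation~\ref{obs:common}(c) applied to the blue link $\lambda(c)$ and each $b\in\sigma(c)$: the only common neighbour is $c$, which is blue, so $b$ cannot be blue.

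To finish, first suppose all caps are red. Consecutive caps share a base vertex $b$, and by Observation~\ref{obs:common}(d) their only common neighbours are $b$ and the red hub. So $b$ must be blue, which makes all base vertices blue and contradicts Claim~2. Hence some cap is blue, say $4$ after rotating. By Claim~4, the vertices $0,1$ are red. Apply Observation~\ref{obs:common}(e) with $c=5$, $c'=4$, $b=1$, $r=0$: the only common neighbour of $5$ and $0$ is the red vertex $1$, and $0$ is red, so $5$ is blue. Symmetrically, with $c=7$, $c'=4$, $b=0$, $r=1$, the cap $7$ is blue. Claim~4 applied to $5$ and $7$ then makes $1,2,3,0$ all red, contradicting Claim~2.

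I expect the main obstacle to be organizing the case split so that it stays short. The key choices are normalizing the hub's colour first and disposing of a red link first; after that, everything propagates through Claim~4. The remaining care is in checking that the symmetries used (colour swap, rotation of the base cycle) preserve the structure. They do, since the rotation $i\mapsto i+1$ on $B$ and on the caps, with the two links interchanged, is an automorphism of $G_{11}$.
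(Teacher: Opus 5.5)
Your proof is correct and follows essentially the same route as the paper. You normalize the hub to red, force both links blue via Observation~\ref{obs:common}(b),(c), and then split into the all-caps-red case (Observation~\ref{obs:common}(d)) and the some-cap-blue case (Observation~\ref{obs:common}(c),(e)), each ending in a monochromatic base; the differences are minor. You use the rotation automorphism for the ``without loss of generality'' steps, and in the blue-cap case you stop after reaching the two caps $5,7$ of the other link, using $\sigma(5)\cup\sigma(7)=B$, rather than propagating all the way around the cap cycle as the paper does.
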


\begin{proof}
By Lemma~\ref{lem:g11basic} and Proposition~\ref{prop:upper}, $\cmu\le\chi=3$. Suppose, for a contradiction, that $\cmu(G_{11})\le2$. Fix a colouring $\varphi\colon V(G_{11})\to\{\R,\B\}$ with property~(V). Exchanging the colours if necessary, we may assume that the hub $h$ is red. We prove four claims about how $\varphi$ must colour the links, the caps and the base vertices (illustrated in Figure~\ref{fig:claims}), and then reach a contradiction in two cases. Every contradiction will be with the following fact:
\[
(\star)\qquad \text{the base vertices do not all have the same colour.}
\]
Indeed, the only common neighbours of two opposite base vertices are the other two base vertices (Observation~\ref{obs:common}(a)), so if all four had the same colour, (V) would fail for an opposite pair. Each step below applies~(V) in the same way: for two non-adjacent vertices $u,w$ we observe that $u$ and all common neighbours of $u$ and $w$ have the same colour $\gamma$, and conclude that $w$ does not have colour $\gamma$.

\claimhead{1}{Both links are blue.}
Suppose a link $\ell$ is red, and let $\ell'$ be the other link.
\par\noindent(i) Let $x$ be $\ell'$ or one of the two caps of $\ell'$ (these are not adjacent to $\ell$). The only common neighbour of $\ell$ and $x$ is $h$ (Observation~\ref{obs:common}(b)); as $\ell$ and $h$ are red, $x$ is blue.
\par\noindent(ii) Let $b\in B$. The only common neighbour of $\ell'$ and $b$ is a cap of $\ell'$ (Observation~\ref{obs:common}(c)); as $\ell'$ and that cap are blue by~(i), $b$ is red.
\par\noindent So all base vertices are red, contradicting~$(\star)$.\claimqed

\claimhead{2}{If a cap $c$ is blue, then both base vertices in $\sigma(c)$ are red.}
Let $b\in\sigma(c)$. The only common neighbour of $\lambda(c)$ and $b$ is $c$ (Observation~\ref{obs:common}(c)); as $\lambda(c)$ (blue by Claim~1) and $c$ are blue, $b$ is red.\claimqed

\claimhead{3}{If two red caps $c,c'$ share a base vertex $b$, then $b$ is blue.}
The only common neighbours of $c$ and $c'$ are $b$ and $h$ (Observation~\ref{obs:common}(d)); as $c$, $c'$ and $h$ are red, $b$ is blue.\claimqed

\claimhead{4}{If a cap $c$ is blue and a cap $c'$ shares a base vertex with $c$, then $c'$ is blue.}
Write $\sigma(c)=\{b,s\}$ with $b$ the shared base vertex; by Claim~2, $b$ and $s$ are red. The only common neighbour of $c'$ and $s$ is $b$ (Observation~\ref{obs:common}(e), with the roles of $c$ and $c'$ exchanged); as $s$ and $b$ are red, $c'$ is blue.\claimqed

\medskip\noindent Using the four claims, we now distinguish two cases.

\noindent\emph{Case 1: some cap is blue.} Since consecutive caps in the cyclic order $4,5,6,7$ share a base vertex, repeated application of Claim~4 shows that all four caps are blue. By Claim~2 for the caps $4$ and $6$, all base vertices in $\sigma(4)\cup\sigma(6)=B$ are red, contradicting~$(\star)$.

\noindent\emph{Case 2: all caps are red.} By Claim~3 for the pairs $(4,5)$, $(5,6)$, $(6,7)$, $(7,4)$, which share the base vertices $1,2,3,0$, respectively, all base vertices are blue, contradicting~$(\star)$.

Hence $\cmu(G_{11})\ge3$.
\end{proof}

\subsection{The family \texorpdfstring{$G_{11}(t)$}{G11(t)}}

\begin{definition}\label{def:g11t}
For $t\ge1$, let $G_{11}(t)$ be obtained from $G_{11}$ by replacing the cap $4$ by $t$ pairwise non-adjacent vertices $4^{(1)},4^{(2)},\dots,4^{(t)}$, each adjacent to exactly $N(4)=\{0,1,8,10\}$. Thus $G_{11}(1)$ is $G_{11}$ with the cap $4$ renamed $4^{(1)}$ (see Figure~\ref{fig:family} for $t=3$).
\end{definition}

\begin{figure}[ht]
\centering
\includegraphics[width=0.95\linewidth]{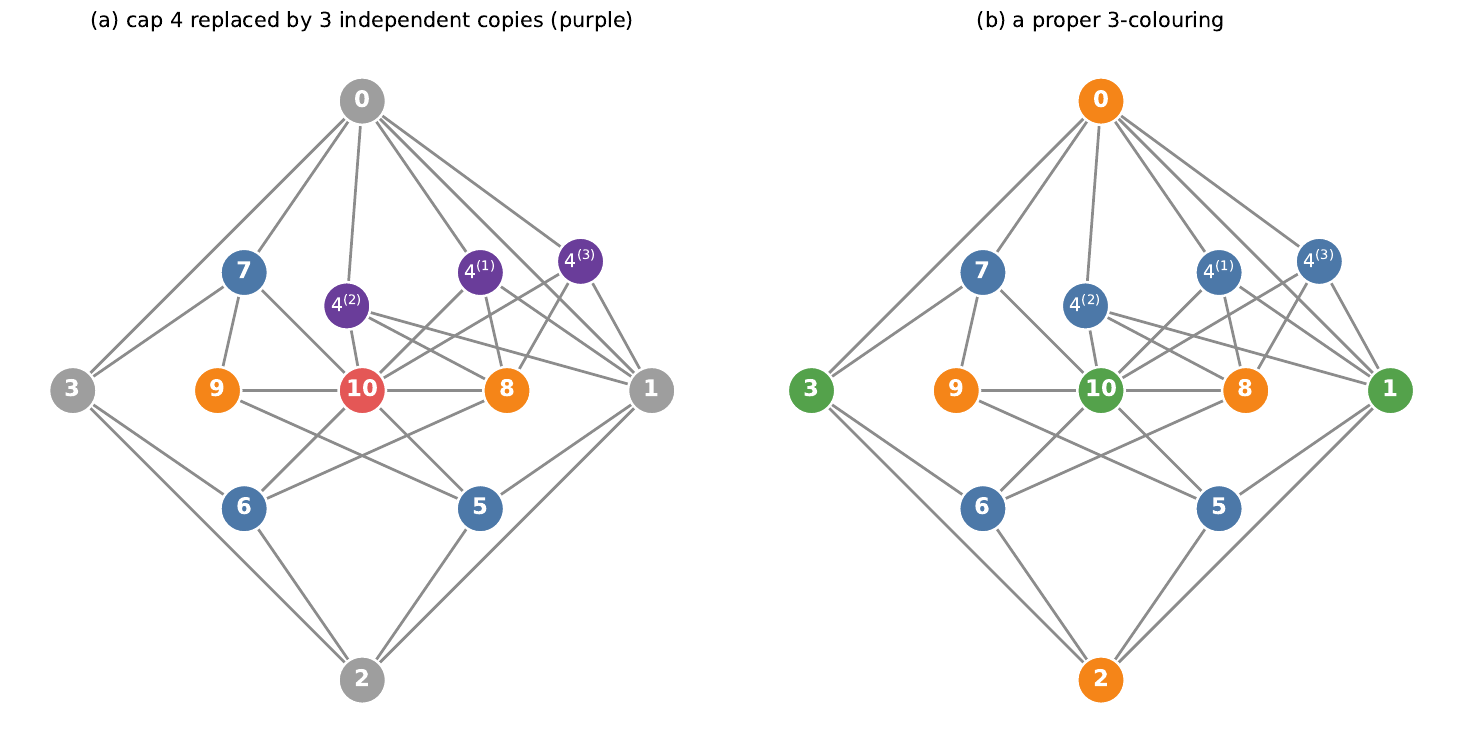}
\caption{The graph $G_{11}(3)$. (a) The cap $4$ is replaced by three pairwise non-adjacent copies $4^{(1)},4^{(2)},4^{(3)}$ with the same neighbourhood $\{0,1,8,10\}$. (b) A proper $3$-colouring with classes $\{4^{(1)},4^{(2)},4^{(3)},5,6,7\}$, $\{0,2,8,9\}$, $\{1,3,10\}$.}
\label{fig:family}
\end{figure}

All copies $4^{(i)}$ are again called caps. The caps of $G_{11}(t)$ fall into four \emph{cap classes}
\[
K_4=\{4^{(1)},\dots,4^{(t)}\},\qquad K_5=\{5\},\qquad K_6=\{6\},\qquad K_7=\{7\},
\]
and we put $\sigma(K_i)=\sigma(i)$ and $\lambda(K_i)=\lambda(i)$ ($i=4,5,6,7$). Let $\pi\colon V(G_{11}(t))\to V(G_{11})$ map every copy $4^{(i)}$ to $4$ and fix all other vertices. By construction,
\begin{equation}\label{eq:proj}
\begin{aligned}
&u\sim w \iff \pi(u)\sim\pi(w) &&\text{whenever } \pi(u)\ne\pi(w),\\
&N(u)=\pi^{-1}\bigl(N(\pi(u))\bigr) &&\text{for every } u,
\end{aligned}
\end{equation}
where $u,w$ are vertices of $G_{11}(t)$, the neighbourhood $N(\pi(u))$ is taken in $G_{11}$, and $\pi^{-1}(S)$ denotes the preimage of a set $S\subseteq V(G_{11})$; two distinct copies $4^{(i)},4^{(j)}$ are non-adjacent. Consequently, for any two vertices $u,w$ of $G_{11}(t)$, the set of common neighbours of $u$ and $w$ is the preimage under $\pi$ of the set of common neighbours of $\pi(u)$ and $\pi(w)$ in $G_{11}$: it is obtained from the latter set by replacing $4$, if present, with the whole class $K_4$. Since the two vertices in each item below have distinct images under $\pi$, this gives the following analogue of Observation~\ref{obs:common} for $G_{11}(t)$.

\begin{observation}[Common neighbourhoods in $G_{11}(t)$]\label{obs:common-t}
In $G_{11}(t)$, with hub $h$ and base $B$, the following hold.
\begin{enumerate}[label=(\alph*$'$)]
\item Two opposite base vertices are non-adjacent, and their common neighbours are exactly the other two base vertices.
\item Let $\ell$ be a link, and let $x$ be the other link or a cap not adjacent to $\ell$. Then $\ell\not\sim x$, and $h$ is the only common neighbour of $\ell$ and $x$.
\item Let $\ell$ be a link and $b\in B$, and let $K$ be the unique cap class with $\lambda(K)=\ell$ and $b\in\sigma(K)$. Then $\ell\not\sim b$, and the common neighbours of $\ell$ and $b$ are exactly the caps in $K$.
\item Let $K,K'$ be distinct cap classes sharing exactly one base vertex $b$, and $x\in K$, $y\in K'$. Then $x\not\sim y$, and the common neighbours of $x$ and $y$ are exactly $b$ and $h$.
\item In the situation of (d$'$), let $r$ be the base vertex with $\sigma(K')=\{b,r\}$. Then $x\not\sim r$, and $b$ is the only common neighbour of $x$ and $r$.
\end{enumerate}
\end{observation}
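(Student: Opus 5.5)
The statement to prove is Observation~\ref{obs:common-t}. I will deduce it from Observation~\ref{obs:common} using the projection $\pi$ and the relations~\eqref{eq:proj}. The single tool is the following principle, already noted before the statement. For vertices $u,w$ of $G_{11}(t)$ with $\pi(u)\neq\pi(w)$:
\begin{itemize}
\item $u\sim w$ if and only if $\pi(u)\sim\pi(w)$;
\item $N(u)\cap N(w)=\pi^{-1}\bigl(N(\pi(u))\cap N(\pi(w))\bigr)$, because $\pi^{-1}$ commutes with intersections.
\end{itemize}
So for each item I will check that the two vertices involved have distinct images under $\pi$. I will then apply the corresponding item of Observation~\ref{obs:common} to the images and pull the answer back. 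Pulling back leaves every common-neighbour set unchanged, except that the vertex $4$ is replaced by the class $K_4$.

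The items go as follows.
\begin{itemize}
\item In (a$'$) both vertices are base vertices, so $\pi$ fixes them. The common-neighbour set $\{1,3\}$ or $\{0,2\}$ contains no cap, so it is unchanged.
\item In (b$'$) the vertex $\ell$ is a link. The vertex $x$ is either the other link or a cap whose class is not adjacent to $\ell$. Then $\pi(x)$ is the other link or a cap not adjacent to $\ell$ in $G_{11}$, and $\pi(x)\neq\ell$. The common-neighbour set $\{h\}$ contains no cap.
\item In (c$'$) the vertices $\ell$ and $b$ are fixed by $\pi$. Their unique common neighbour in $G_{11}$ is the cap $c$ with $\lambda(c)=\ell$ and $b\in\sigma(c)$. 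Its preimage is exactly the class $K$ with $\lambda(K)=\ell$ and $b\in\sigma(K)$, which is $K_4$ when $c=4$ and a singleton otherwise.
\item In (d$'$) the classes $K$ and $K'$ are distinct, so $\pi(x)$ and $\pi(y)$ are distinct caps. These caps share exactly one base vertex $b$, since $\sigma$ is defined on classes through $\pi$. Hence $\pi(x)\not\sim\pi(y)$, and their common neighbours are $\{b,h\}$, which contains no cap.
\item In (e$'$) the vertex $x$ is a cap and $r$ is a base vertex, so $\pi(x)\neq r$. Observation~\ref{obs:common}(e) gives common-neighbour set $\{b\}$.
\end{itemize}

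The only point needing care is the distinct-image hypothesis, and it always holds because the classes in (d$'$) are assumed distinct. A second point is that the combinatorial data $\sigma$, $\lambda$ and ``sharing'' for classes coincide with those of their images, which holds by the definitions $\sigma(K_i)=\sigma(i)$ and $\lambda(K_i)=\lambda(i)$. I do not expect a genuine obstacle; the proof is this bookkeeping.
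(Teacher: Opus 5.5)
Your proposal is correct and follows the paper's own argument: the paper also deduces Observation~\ref{obs:common-t} from Observation~\ref{obs:common} via the projection $\pi$. It notes that common-neighbour sets pull back as preimages (with $4$ replaced by $K_4$), and that the two vertices in each item have distinct images under $\pi$, so adjacency transfers by~\eqref{eq:proj}. Your item-by-item check of the distinct-image hypothesis, and of the class data $\sigma$, $\lambda$ and ``sharing'', simply spells out what the paper states in one sentence before the observation.
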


\begin{lemma}\label{lem:g11t-basic}
For every $t\ge1$, $G_{11}(t)$ has diameter two and $\chi(G_{11}(t))=3$.
\end{lemma}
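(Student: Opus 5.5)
The plan is to transfer Lemma~\ref{lem:g11basic} from $G_{11}$ to $G_{11}(t)$ through the projection $\pi$ and the relations~\eqref{eq:proj}, treating diameter and chromatic number separately.

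\emph{Diameter.} Let $u,w$ be distinct vertices of $G_{11}(t)$; we show they are adjacent or have a common neighbour. The graph is not complete, since for instance $0\not\sim 2$, so this gives diameter exactly two.
\begin{itemize}
\item If $\pi(u)\ne\pi(w)$ and $\pi(u)\sim\pi(w)$, then $u\sim w$ by~\eqref{eq:proj}.
\item If $\pi(u)\ne\pi(w)$ and they are non-adjacent, Lemma~\ref{lem:g11basic} provides a common neighbour $z$ of $\pi(u),\pi(w)$ in $G_{11}$. Any $z'\in\pi^{-1}(z)$ lies in $N(u)\cap N(w)$, because $N(u)=\pi^{-1}(N(\pi(u)))$ and similarly for $w$.
\item If $\pi(u)=\pi(w)$, then $u,w$ are two distinct copies $4^{(i)},4^{(j)}$. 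They share the neighbourhood $\{0,1,8,10\}$, so, for example, $0$ is a common neighbour.
\end{itemize}
The last case is the only point where the projection argument needs supplementing, and it is trivial. I expect no real obstacle anywhere in the proof; the main care is simply not to overlook this case of two copies of the same vertex.

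\emph{Chromatic number.} For the lower bound, the triangle $\{4^{(1)},8,10\}$ lies in $G_{11}(t)$: all three images under $\pi$ are distinct and pairwise adjacent in $G_{11}$, so~\eqref{eq:proj} gives the edges. Hence $\chi\ge3$.

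For the upper bound, pull back the proper $3$-colouring of $G_{11}$ from Lemma~\ref{lem:g11basic}, i.e.\ colour each vertex $u$ with the colour of $\pi(u)$. This gives the classes $\{4^{(1)},\dots,4^{(t)},5,6,7\}$, $\{0,2,8,9\}$, $\{1,3,10\}$. The colouring is proper. An edge $uw$ with $\pi(u)\ne\pi(w)$ maps to an edge of $G_{11}$, whose ends receive different colours. If instead $\pi(u)=\pi(w)$, then $u$ and $w$ are distinct copies, which are non-adjacent, so no such edge exists. Thus $\chi(G_{11}(t))=3$.
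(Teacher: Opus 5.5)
Your proof is correct and follows the paper's argument essentially step for step. You transfer diameter two through the projection $\pi$ and~\eqref{eq:proj}, handle two copies of the cap separately (the paper uses the hub $h$ as their common neighbour where you use $0$), pull back the proper $3$-colouring of $G_{11}$, and use the triangle $\{4^{(1)},8,10\}$ for the lower bound.
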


\begin{proof}
Two non-adjacent vertices with distinct images under $\pi$ are at distance two by Lemma~\ref{lem:g11basic} and~\eqref{eq:proj}, and two copies $4^{(i)},4^{(j)}$ have the common neighbour $h$; as $G_{11}(t)$ is not complete, its diameter is two. Giving every vertex $v$ of $G_{11}(t)$ the colour of $\pi(v)$ in the $3$-colouring of Lemma~\ref{lem:g11basic} yields a proper $3$-colouring (Figure~\ref{fig:family}(b)), since adjacent vertices have distinct, adjacent images by~\eqref{eq:proj}. As $\{4^{(1)},8,10\}$ is a triangle, $\chi(G_{11}(t))=3$.
\end{proof}

\begin{theorem}\label{thm:family}
For every $t\ge1$, the graph $G_{11}(t)$ has $10+t$ vertices, diameter two, and $\cmu(G_{11}(t))=\chi(G_{11}(t))=3$.
\end{theorem}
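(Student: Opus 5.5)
The order $10+t$ is immediate from Definition~\ref{def:g11t}, since exactly one vertex of $G_{11}$ is replaced by $t$ vertices. Diameter two and $\chi(G_{11}(t))=3$ are Lemma~\ref{lem:g11t-basic}, so Proposition~\ref{prop:upper} gives $\cmu(G_{11}(t))\le3$. It remains to show that no colouring $\varphi\colon V(G_{11}(t))\to\{\R,\B\}$ has property~(V). Since $G_{11}(t)$ is not complete, this gives $\cmu\ge3$.

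I would rerun the proof of Theorem~\ref{thm:g11} word for word, replacing every use of Observation~\ref{obs:common} by the corresponding item of Observation~\ref{obs:common-t}. We may assume the hub is red. Fact~$(\star)$ follows from (a$'$).

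\textbf{Claim 1 (both links blue).} Suppose a link $\ell$ is red. By (b$'$), the other link $\ell'$ and every cap adjacent to $\ell'$ are blue. If $\ell'=8$, this covers all copies in $K_4$. By (c$'$), for each $b\in B$ the common neighbours of $\ell'$ and $b$ are exactly the caps of one class, all of which are blue, and $\ell'$ is blue. Hence every $b$ is red, contradicting~$(\star)$.

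\textbf{Claim 2 (a blue cap forces its base vertices red).} If a cap $x\in K$ is blue and $b\in\sigma(K)$, the common neighbours of $\lambda(K)$ and $b$ are all of $K$. So the clean statement is: if every cap of a class $K$ is blue, then $\sigma(K)$ is red. This is the main obstacle, because a single blue copy no longer suffices. I would handle it by working with \emph{monochromatic} classes, and proving first that $K_4$ is monochromatic. I expect this follows from the case analysis rather than directly. Two copies $4^{(i)},4^{(j)}$ are non-adjacent with common neighbours $\{0,1,8,10\}$. Since links are blue by Claim~1 and the hub is red, these common neighbours always include both colours, so (V) alone does not force monochromaticity.

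So I would instead adapt the case split.

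\emph{Case 1: some cap in $K_5\cup K_6\cup K_7$ is blue.} Claims 2 and 4 hold verbatim for the singleton classes. For example, Claim~4 from a blue singleton cap $c$ with shared vertex $b$ and other base vertex $s$ (both red by Claim 2) gives, via (e$'$), that every cap $y$ of the neighbouring class is blue. Indeed, (e$'$) needs $x$ in the class of $c$ and $r$ in $\sigma$ of the other class, so I would apply it with roles arranged so that $y$ ranges over the class whose colour is being deduced and $r=s\in\sigma(c)$. This propagates around the cyclic order, so every copy in $K_4$ and every singleton cap is blue. Then Claim~2 applied to the classes $K_5$ and $K_7$ makes all of $\sigma(5)\cup\sigma(7)=B$ red, contradicting $(\star)$.

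\emph{Case 2: $5,6,7$ are red.} Claim~3 with (d$'$) for the pairs $(5,6)$ and $(6,7)$ makes $2$ and $3$ blue. If some copy $x\in K_4$ is red, the pairs $(x,5)$ and $(7,x)$ make $1$ and $0$ blue, contradicting $(\star)$. Otherwise every copy in $K_4$ is blue, which falls into the propagation argument of Case~1 started from $K_4$. That class is now entirely blue, so Claim~2 applies to it and gives $0,1$ red, and Claim~4 then propagates blue to $5$, contradicting the assumption that $5$ is red.

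In writing this up I would double-check each invocation of (c$'$)--(e$'$) against whether it requires a whole class, or just one vertex, to be blue.
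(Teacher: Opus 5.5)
Your proof is correct and is essentially the paper's argument. The paper also treats a cap class as blue only when all its caps are blue (Claims~2$'$ and~4$'$), and splits into ``some class is entirely blue'' versus ``every class contains a red cap'', picking red representatives for Claim~3$'$. Your split (first on $5,6,7$, then on $K_4$) is only a regrouping of these same two cases, and as you found, monochromaticity of $K_4$ is never needed.
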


\begin{proof}
We follow the proof of Theorem~\ref{thm:g11}, with caps replaced by cap classes; Figure~\ref{fig:claims} applies with each cap read as its class. By Lemma~\ref{lem:g11t-basic} and Proposition~\ref{prop:upper}, $\cmu\le\chi=3$. Suppose that $\cmu(G_{11}(t))\le2$, and fix a colouring $\varphi$ with property~(V) in which $h$ is red (exchanging the colours if necessary). As before, by Observation~\ref{obs:common-t}(a$'$),
\[
(\star)\qquad \text{the base vertices do not all have the same colour.}
\]
Call a cap class \emph{blue} if all its caps are blue.

\claimhead{1$'$}{Both links are blue.}
Suppose a link $\ell$ is red, and let $\ell'$ be the other link.
\par\noindent(i) Let $x$ be $\ell'$ or a cap in one of the two classes of $\ell'$. The only common neighbour of $\ell$ and $x$ is $h$ (Observation~\ref{obs:common-t}(b$'$)); as $\ell$ and $h$ are red, $x$ is blue.
\par\noindent(ii) Let $b\in B$. The only common neighbours of $\ell'$ and $b$ are the caps of a class of $\ell'$ (Observation~\ref{obs:common-t}(c$'$)); as $\ell'$ and these caps are blue by~(i), $b$ is red.
\par\noindent So all base vertices are red, contradicting~$(\star)$.\claimqed

\claimhead{2$'$}{If a cap class $K$ is blue, then both base vertices in $\sigma(K)$ are red.}
Let $b\in\sigma(K)$. The only common neighbours of $\lambda(K)$ and $b$ are the caps in $K$ (Observation~\ref{obs:common-t}(c$'$)); as $\lambda(K)$ (blue by Claim~1$'$) and these caps are blue, $b$ is red.\claimqed

\claimhead{3$'$}{If distinct cap classes $K,K'$ share a base vertex $b$, and $x\in K$, $y\in K'$ are red, then $b$ is blue.}
The only common neighbours of $x$ and $y$ are $b$ and $h$ (Observation~\ref{obs:common-t}(d$'$)); as $x$, $y$ and $h$ are red, $b$ is blue.\claimqed

\claimhead{4$'$}{If a cap class $K$ is blue and a cap class $K'\ne K$ shares a base vertex with $K$, then $K'$ is blue.}
Write $\sigma(K)=\{b,s\}$ with $b$ the shared base vertex; by Claim~2$'$, $b$ and $s$ are red. For every $y\in K'$, the only common neighbour of $y$ and $s$ is $b$ (Observation~\ref{obs:common-t}(e$'$), with the roles of $K$ and $K'$ exchanged); as $s$ and $b$ are red, $y$ is blue.\claimqed

\medskip\noindent Using the four claims, we now distinguish two cases.

\noindent\emph{Case 1: some cap class is blue.} Since consecutive classes in the cyclic order $K_4,K_5,K_6,K_7$ share a base vertex, repeated application of Claim~4$'$ shows that all four classes are blue, and by Claim~2$'$ for $K_4$ and $K_6$ all base vertices are red, contradicting~$(\star)$.

\noindent\emph{Case 2: every cap class contains a red cap.} Choose red caps $x_i\in K_i$. By Claim~3$'$ for the consecutive pairs $(x_4,x_5)$, $(x_5,x_6)$, $(x_6,x_7)$, $(x_7,x_4)$, which share the base vertices $1,2,3,0$, respectively, all base vertices are blue, contradicting~$(\star)$.

Hence $\cmu(G_{11}(t))\ge3$.
\end{proof}

\begin{corollary}\label{cor:g11}
The graphs $G_{11}(t)$, $t\ge1$, form an infinite family of diameter-two graphs which are not bipartite and satisfy $\cmu(G_{11}(t))=\chi(G_{11}(t))$.
\end{corollary}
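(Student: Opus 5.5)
The corollary follows almost directly from Theorem~\ref{thm:family}, so the plan is to assemble three facts about $G_{11}(t)$. First, the equality $\cmu(G_{11}(t))=\chi(G_{11}(t))=3$ and the diameter-two property for every $t\ge1$ are exactly the content of Theorem~\ref{thm:family}, together with Lemma~\ref{lem:g11t-basic}. Second, infinitude: by Theorem~\ref{thm:family}, $G_{11}(t)$ has $10+t$ vertices. Hence the graphs for different $t$ have different orders, are pairwise non-isomorphic, and there are infinitely many of them.

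Third, non-bipartiteness, which also shows that none of these graphs is complete bipartite, so they answer Problem~\ref{prob:83}. I would take the triangle $\{4^{(1)},8,10\}$ exhibited in the proof of Lemma~\ref{lem:g11t-basic}. Its edges are $4^{(1)}\sim8$ and $4^{(1)}\sim10$ because $N(4^{(1)})=\{0,1,8,10\}$, and $8\sim10$ by Definition~\ref{def:g11}. An odd cycle excludes bipartiteness. Equivalently, $\chi(G_{11}(t))=3>2$.

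There is no real obstacle. The only point worth stating explicitly is that non-bipartiteness is the property that places these graphs outside the complete bipartite graphs $K_{r,s}$ of Proposition~\ref{prop:upper}. This is what makes the family a genuine affirmative answer to Problem~\ref{prob:83}.
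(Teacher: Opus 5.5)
Your proposal is correct and matches the paper's proof, which likewise deduces the corollary from Theorem~\ref{thm:family} together with the triangle $\{4^{(1)},8,10\}$. Your remark that the distinct orders $10+t$ give infinitely many pairwise non-isomorphic graphs spells out a point the paper leaves implicit.
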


\begin{proof}
This follows from Theorem~\ref{thm:family}, since each $G_{11}(t)$ contains the triangle $\{4^{(1)},8,10\}$.
\end{proof}

\section{Concluding remarks}\label{sec:concl}

Theorems~\ref{thm:g11} and~\ref{thm:family} answer the second question of Problem~\ref{prob:83}: the complete bipartite graphs are not the only diameter-two graphs $G$ with $\cmu(G)=\chi(G)$, and there are infinitely many others. A full characterization appears difficult; already deciding whether $\cmu(G)\le 2$ is NP-complete, even for graphs of diameter four~\cite{BDL2025}. We mention some natural questions.

\begin{enumerate}[itemsep=2pt]
\item Is there, for every $k\ge4$, a diameter-two graph $G$ with $\cmu(G)=\chi(G)=k$?
\item Is there a structural description of the diameter-two graphs with $\cmu(G)=\chi(G)=3$?
\end{enumerate}

\section*{Use of generative AI}
During this work the author used Claude Opus 5.5 (Anthropic), which designed and ran the computer searches that found $G_{11}$, proposed the family $G_{11}(t)$, produced the proofs of Theorems~\ref{thm:g11} and~\ref{thm:family}, wrote the code that produced the figures, and helped prepare the manuscript. The author directed the investigation through questions that shaped its course, verified every step of each proof by hand, and takes full responsibility for the content of the publication.

\end{document}